\theoremstyle{plain}
\newtheorem{theorem}{Theorem}
\newtheorem{lemma}[theorem]{Lemma}
\newtheorem{corollary}[theorem]{Corollary}
\theoremstyle{definition}
\newtheorem{definition}[theorem]{Definition}
\DeclareMathOperator{\HD}{HD}
\DeclareMathOperator{\Lip}{Lip}
\DeclareMathOperator{\HausLip}{HausLip}
\DeclareMathOperator{\Skew}{SkewLoc}
\DeclareMathOperator{\diag}{diag}
\DeclareMathOperator{\dimtop}{dim}
\newcommand{\logplus}{\log^+}
\newcommand{\R}{\mathbb{R}}
\newcommand{\Z}{\mathbb{Z}}
\newcommand{\N}{\mathbb{N}}
\renewcommand{\epsilon}{\varepsilon}
\renewcommand{\mid}{\hspace{5pt minus 5pt}|\hspace{5pt minus 5pt}}
\begin{document}

\title{Inequalities for Entropy, Hausdorff Dimension, and Lipschitz Constants}
\author{Samuel Roth}
\author{Zuzana Roth}
\address{Silesian University in Opava, Na Rybni\v{c}ku 1, 74601 Opava, Czech Republic}
\email{zuzana.roth@math.slu.cz}
\email{samuel.roth@math.slu.cz}
\subjclass[2010]{37B40, 37F35, 54F45}
\keywords{Topological entropy, Hausdorff dimension, Lipschitz continuity}
\thanks{Research supported by SGS 16/2016 and by RVO funding for I\v{C}47813059.}
\begin{abstract}
We construct suitable metrics for two classes of topological dynamical systems (linear maps on the torus and non-invertible expansive maps on compact spaces) in order to get a lower bound for topological entropy in terms of the resulting Hausdorff dimensions and Lipschitz constants. This reverses an old inequality of Dai, Zhou, and Gheng and leads to a short proof of a well-known theorem on expansive mappings. It also suggests a new invariant of topological conjugacy for dynamical systems on compact metric spaces. 
\end{abstract}
\maketitle

\section{Introduction}

A common task in topological dynamics is to estimate the entropy of a given system. There are several inequalities available for this purpose; many of them combine notions of \emph{dimension}: Hausdorff dimension, topological dimension, dimension of the Riemannian manifold, etc., together with \emph{expansion}: the norm of the derivative, Lyapunov exponents, volume growth, etc.~\cite{Bo, Ru, Yo}. One of these inequalities, first proved twenty years ago by Dai, Zhou, and Gheng, says that a Lipschitz continuous map on a compact metric space with Lipschitz constant $L\geq1$ has topological entropy no more than the Hausdorff dimension of the space times the logarithm of $L$~\cite{DZG}. An interesting feature of this upper bound for entropy is the possibility to sharpen it by changing the metric -- a better metric may give the map a smaller Lipschitz constant or the space a smaller Hausdorff dimension. We are interested in knowing how sharp this inequality can get. Therefore, for a given compact metrizable space $X$ and continuous map $f:X\to X$, we propose to study the number
\begin{equation}\label{start}
\HausLip(X,f):=\inf_{d\in\mathcal{D}(X)} \HD_d(X)\cdot\logplus\Lip_d(f),
\end{equation}
where the infimum is taken over the set $\mathcal{D}(X)$ of metrics on $X$ compatible with its topology -- for further definitions, see Section~\ref{sec:defs}. We call this number the \emph{HausLip constant} of the system $(X,f)$. It is an invariant of topological conjugacy, bounded from below by the topological entropy. Then the question naturally arises: \emph{Is the HausLip constant equal to the entropy?}


Unfortunately, we find that the HausLip constant is not equal to the entropy in general (or fortunately, if we like new conjugacy invariants). To see this, it suffices to take $X$ the Hilbert cube and $f$ a map with positive but finite entropy. Then for any compatible metric $d$ on $X$ we have $\HD_d(X)=\infty$, since topological dimension gives a lower bound for Hausdorff dimension~\cite{Sz}. At the same time, $\Lip_d(f)>1$, since a map with positive entropy must increase some distances, so that the HausLip constant is clearly infinite.

Nevertheless, in many simple classes of dynamical systems, the HausLip constant is equal to the entropy. In dimension zero, this happens for any closed subshift in a finite-alphabet shift space. In dimension one, this holds for piecewise monotone interval maps. It also holds for $C^\infty$-smooth interval maps. We do not know if it holds for all interval maps. We briefly discuss each of these examples in Section~\ref{sec:defs}, with references to the appropriate literature.

The main contribution of this paper is a positive answer for two more classes of maps: linear maps on the torus -- Section~\ref{sec:torus} -- and one-sided expansive systems -- Section~\ref{sec:expansive}. In each case we give an explicit construction of metrics for which the Hausdorff dimension times the logarithm of the Lipschitz constant is close to the entropy. The metrics we construct on the torus exploit the local product structure in terms of the stable, neutral, and unstable eigendirections of the linear map. For the expansive systems, our metric gives a nearly uniform separation rate to all pairs of points sufficiently close to the diagonal. This is related to Fathi's construction of a ``hyperbolic metric'' for a two-sided expansive homeomorphism~\cite{Fa}.

Our work on expansive mappings leads to the following nice corollary: a compact metric space $X$ admitting a one-sided expansive mapping necessarily has finite topological dimension. This is the one-sided version of a theorem by Ma\v{n}\'{e}~\cite{Ma}; the idea of deriving it from an entropy inequality comes from Fathi~\cite{Fa}.


\section{Definitions and Background Results}\label{sec:defs}

Let $X$ be a compact metrizable space and $f:X\to X$ a continuous map. A metric $d:X\times X\to[0,\infty)$ is \emph{compatible} if the topology it induces coincides with the topology of $X$. We write $\mathcal{D}(X)$ for the set of all compatible metrics. The \emph{Lipschitz constant} $\Lip_d(f)$ and the \emph{local skew} $\Skew_d(f)$ (this is a kind of anti-Lipschitz constant) of $f$ with respect to the metric $d$ are the quantities
\begin{equation*}
\Lip_d(f)=\sup_{0<d(x,y)} \frac{d(fx, fy)}{d(x,y)}, \qquad \Skew_d(f)=\adjustlimits\sup_{\epsilon>0} \inf_{0<d(x,y)<\epsilon} \frac{d(fx,fy)}{d(x,y)}.
\end{equation*}
We write $\logplus(y)$ to denote the maximum of $\log(y)$ and $0$.

The diameter of a subset $B\subset X$ under the metric $d$ is $|B|_d=\sup\{d(x,y)\mid x,y\in B\}$. An \emph{$\epsilon$-cover} of $X$ is a collection of sets $B_i$ each of diameter $\leq\epsilon$ whose union equals $X$. Then $\mu^s_\epsilon(X)=\inf \sum_i |B_i|_d^s$ where the infimum is taken over all possible $\epsilon$-covers. As $\epsilon$ decreases, the class of $\epsilon$-covers also decreases, leading to a well-defined limit $\mu^s(X)=\lim_{\epsilon\to0}\mu^s_\epsilon(X)$, the so-called \emph{$s$-dimensional Hausdorff measure}. Then the \emph{Hausdorff dimension} $\HD_d(X)$ of $X$ with respect to the metric $d$ is 
\begin{equation*}
\HD_d(X)=\inf\{s\geq0\mid\mu^s(X)=0\}.
\end{equation*}

There is also the \emph{topological dimension} $\dimtop(X)$ (i.e. Lebesgue covering dimension), which may be defined as the smallest integer $m$ such that for each $\epsilon>0$ there is an $\epsilon$-cover of $X$ by open sets such that each point of $X$ belongs to at most $m+1$ members of the cover; if no such $m$ exists we write $\dimtop(X)=\infty$. The topological dimension does not depend on the choice of the compatible metric $d$, and we have the inequality $\dimtop(X)\leq\HD_d(X)$ for every metric $d\in\mathcal{D}(X)$, see, eg.~\cite{HW}.

We omit the definition of the \emph{topological entropy} $h(f)$, since it is readily available~\cite{Wa} and since we will not need to work with the definition directly. Instead, we work entirely with the following two inequalities:

\begin{lemma}\label{lem:bounds} \cite{DJ, DZG, Mi}
For every metric $d\in\mathcal{D}(X)$ we have
\begin{equation*}
\HD_d(X)\logplus\Skew_d(f) \leq h(f) \leq \HD_d(X)\logplus\Lip_d(f),
\end{equation*}
except that we must ignore bounds of the form $0\cdot\infty$ or $\infty\cdot0$.
\end{lemma}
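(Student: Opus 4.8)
The plan is to deduce both inequalities from the Bowen--Dinaburg description of entropy, citing \cite{Wa}: writing $d_n(x,y)=\max_{0\le k<n}d(f^kx,f^ky)$ for the dynamical metrics and $N(\epsilon,d_n)$ for the least number of $d_n$-balls of radius $\epsilon$ needed to cover $X$, one has $h(f)=\lim_{\epsilon\to0}\limsup_n\frac1n\log N(\epsilon,d_n)$. I would first record the two elementary comparisons between $d$ and $d_n$ that drive everything. When $L:=\Lip_d(f)\ge1$ then $d(f^kx,f^ky)\le L^k d(x,y)$, so $d_n\le L^{n-1}d$. If instead $\sigma<\Skew_d(f)$, there is $\epsilon_0>0$ with $d(fx,fy)\ge\sigma\,d(x,y)$ whenever $0<d(x,y)<\epsilon_0$, which iterates to $d(x,y)\le\sigma^{-(n-1)}d(f^{n-1}x,f^{n-1}y)$ as long as the orbit stays within $\epsilon_0$.

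For the lower bound I would use the expansion estimate to squeeze Bowen balls inside ordinary balls. Assuming $\sigma>1$ and fixing $\epsilon\le\epsilon_0$, the containment above shows that every $d_n$-ball of radius $\epsilon$ has $d$-diameter at most $\rho_n:=2\sigma^{-(n-1)}\epsilon$; hence a minimal cover of $X$ by $N(\epsilon,d_n)$ such Bowen balls is in particular a $\rho_n$-cover of $X$ in the metric $d$, so that for every $s\ge0$
\[ \mu^s_{\rho_n}(X)\le N(\epsilon,d_n)\,\rho_n^{\,s}. \]
If $s<\HD_d(X)$ then $\mu^s(X)=\infty$ and $\mu^s_{\rho_n}(X)\to\infty$, so for large $n$ we get $N(\epsilon,d_n)\ge\rho_n^{-s}=(2\epsilon)^{-s}\sigma^{(n-1)s}$; dividing by $n$, taking $\limsup_n$ and then $\epsilon\to0$ yields $h(f)\ge s\log\sigma$. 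Letting $s\uparrow\HD_d(X)$ and $\sigma\uparrow\Skew_d(f)$ gives $\HD_d(X)\logplus\Skew_d(f)\le h(f)$; the cases $\Skew_d(f)\le1$ and $\HD_d(X)=0$ are trivial, and the degenerate product $0\cdot\infty$ is exactly the one excluded in the statement.

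For the upper bound the dual comparison $d_n\le L^{n-1}d$ shows that any cover of $X$ by sets of $d$-diameter $\le\epsilon L^{-(n-1)}$ is a cover by sets of $d_n$-diameter $\le\epsilon$, so $N(\epsilon,d_n)\le N_d(\epsilon L^{-(n-1)})$, where $N_d(\rho)$ denotes the $\rho$-covering number in $d$. Feeding this into the entropy formula gives the clean but slightly weaker estimate $h(f)\le\overline{\BoxDim}_d(X)\logplus L$. The passage from upper box dimension to Hausdorff dimension in the stated inequality is the genuine content of \cite{DZG} (with the box-dimension form also in \cite{Mi, DJ}); I would therefore invoke it here, since a single-scale cover only controls box dimension, whereas capturing $\HD_d(X)$ requires a multiscale argument that counts the elements of the dynamical refinement through a Hausdorff-type weighted sum rather than through their mere cardinality.

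The main obstacle is precisely this last step. The lower bound is self-contained once the expansion estimate is iterated carefully, the only subtlety being to keep the orbit within the locality radius $\epsilon_0$ so that the skew bound applies at each iterate. The upper bound with $\HD_d(X)$ in place of $\overline{\BoxDim}_d(X)$, however, cannot be extracted from a fixed-scale covering and genuinely relies on the weighted multiscale count of \cite{DZG, DJ}, which is why I would cite rather than reprove it.
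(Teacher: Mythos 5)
The paper offers no proof of this lemma at all: it is stated with the citations \cite{DJ, DZG, Mi} and used as a black box, so there is nothing internal to compare your argument against. That said, your reconstruction is sound and goes somewhat further than the paper does. The lower bound is complete and correct: the one delicate point is iterating the local estimate $d(fx,fy)\ge\sigma\,d(x,y)$ along an orbit, and you handle it properly by observing that inside a Bowen ball of radius $\epsilon\le\epsilon_0$ every iterate up to time $n-1$ stays within the locality radius, so each $(n,\epsilon)$-ball is contained in a $d$-ball of radius $\sigma^{-(n-1)}\epsilon$; feeding the resulting $\rho_n$-covers into $\mu^s_{\rho_n}(X)\to\infty$ for $s<\HD_d(X)$ gives $h(f)\ge s\log\sigma$, and the degenerate cases you set aside are exactly those excluded by the ``$0\cdot\infty$'' caveat in the statement. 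For the upper bound you are right that a fixed-scale covering argument only yields $h(f)\le\overline{\BoxDim}_d(X)\logplus\Lip_d(f)$, and that replacing upper box dimension by Hausdorff dimension is the genuine content of \cite{DZG}; deferring that step to the literature is entirely consistent with the paper, which defers the whole lemma. (If you wanted a self-contained route to the Hausdorff-dimension form, the cleanest is Misiurewicz's observation in \cite{Mi} that Bowen's dimension-like definition of entropy is itself a Hausdorff-type multiscale construction, from which the upper bound follows by comparing the two weighted sums directly rather than by counting a single-scale cover.)
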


This motivates the definition of the \emph{HausLip constant} of the system $(X,f)$ as in~\eqref{start}, with the clarification that we take $0\cdot\infty=\infty\cdot0=\infty$. In other words, when we compute the HausLip constant, we ignore metrics $d\in\mathcal{D}(X)$ with respect to which $f$ is not Lipschitz continuous or $X$ has infinite Hausdorff dimension.

\begin{lemma}\label{lem:invariant}
The HausLip constant is an invariant of topological conjugacy and is bounded below by the topological entropy.
\end{lemma}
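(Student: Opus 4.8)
The plan is to handle the two assertions separately, and both will follow quickly from Lemma~\ref{lem:bounds} and from the behavior of the relevant quantities under a change of metric.

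For the lower bound by entropy, I would simply appeal to the right-hand inequality of Lemma~\ref{lem:bounds}. For each $d\in\mathcal{D}(X)$ that makes the product genuine (neither factor being a degenerate case), that lemma gives $h(f)\le\HD_d(X)\logplus\Lip_d(f)$. For the remaining metrics the product is of the form $0\cdot\infty$ or $\infty\cdot0$, which we have declared equal to $\infty$, so the bound $h(f)\le\HD_d(X)\logplus\Lip_d(f)$ holds there as well. Taking the infimum over all $d\in\mathcal{D}(X)$ then yields $h(f)\le\HausLip(X,f)$.

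For conjugacy invariance, let $\phi:X\to Y$ be a topological conjugacy between $(X,f)$ and $(Y,g)$, so that $\phi$ is a homeomorphism with $\phi\circ f=g\circ\phi$. I would exploit the pullback operation on metrics: to each $d\in\mathcal{D}(Y)$ associate the metric $d_\phi$ on $X$ given by $d_\phi(x,x')=d(\phi x,\phi x')$. Because $\phi$ is a homeomorphism, $d_\phi$ is compatible, and $d\mapsto d_\phi$ is a bijection from $\mathcal{D}(Y)$ onto $\mathcal{D}(X)$ (its inverse being pullback along $\phi^{-1}$). By construction $\phi$ is an isometry from $(X,d_\phi)$ to $(Y,d)$.

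The two computations I would then carry out are: (i) that an isometry preserves Hausdorff dimension, so $\HD_{d_\phi}(X)=\HD_d(Y)$, which is immediate from the definition since $\phi$ maps $\epsilon$-covers to $\epsilon$-covers while preserving diameters; and (ii) that the conjugacy identity transports the Lipschitz constant, namely $\Lip_{d_\phi}(f)=\Lip_d(g)$. For (ii), substituting $\phi f=g\phi$ gives
\begin{equation*}
\Lip_{d_\phi}(f)=\sup_{x\neq x'}\frac{d(\phi f x,\phi f x')}{d(\phi x,\phi x')}=\sup_{x\neq x'}\frac{d(g\phi x,g\phi x')}{d(\phi x,\phi x')}=\Lip_d(g),
\end{equation*}
the last equality because $y=\phi x$ and $y'=\phi x'$ range over all distinct pairs in $Y$. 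Consequently the products $\HD_{d_\phi}(X)\logplus\Lip_{d_\phi}(f)$ and $\HD_d(Y)\logplus\Lip_d(g)$ agree for every $d$, and taking the infimum over the two matching families gives $\HausLip(X,f)=\HausLip(Y,g)$. I do not expect any serious obstacle; the only point requiring a little care is the consistent treatment of the degenerate product $0\cdot\infty=\infty\cdot0=\infty$, both in passing from Lemma~\ref{lem:bounds} to the infimum and in matching the two infima, but since the conjugacy makes the two factors equal term-by-term this causes no difficulty.
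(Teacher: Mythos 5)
Your proposal is correct and follows essentially the same route as the paper: a pullback/pushforward bijection between $\mathcal{D}(X)$ and $\mathcal{D}(Y)$ turning the conjugacy into an isometry (which preserves both $\HD$ and $\Lip$), plus a direct appeal to Lemma~\ref{lem:bounds} for the entropy lower bound. Your explicit remark on the degenerate convention $0\cdot\infty=\infty\cdot0=\infty$ is a small but welcome extra precision that the paper leaves implicit.
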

\begin{proof}
Suppose $(X,f)$ and $(Y,g)$ are conjugate by $\psi$, that is, $\psi:X\to Y$ is a homeomorphism and $g\circ\psi=\psi\circ f$. There is an induced bijection $\psi_*:\mathcal{D}(X)\to\mathcal{D}(Y)$ given by $(\psi_*d)(y,y')=d(\psi^{-1}(y),\psi^{-1}(y'))$. Once we fix metrics, our homeomorphism becomes an isometry $\psi:(X,d)\to(Y,\psi_*d)$, so that $\HD_d(X)=\HD_{\psi_*d}(Y)$ and $\Lip_d(f)=\Lip_{\psi_*d}(g)$. This implies $\HausLip(X,f)=\HausLip(Y,g)$. Finally, the inequality $\HausLip(X,f)\geq h(f)$ follows immediately from Lemma~\ref{lem:bounds}.
\end{proof}

\subsection{Known Examples} We give a short survey of some zero-dimensional and one-dimensional systems for which the entropy and the HausLip constant coincide.

\subsubsection*{Shift spaces:}\strut\\
Let $X$ be any closed, shift-invariant subspace of $\{1,\ldots,r\}^{\N_0}$, $r\geq2$, and $\sigma$ the shift transformation $(\sigma x)_n=x_{n+1}$. It's easy to show that $\HausLip(X,\sigma)=h(\sigma)$. The relevant metric is $d(x,y)=r^{-\inf\{i\mathrel{|}x_i\neq y_i\}}$. Then $d(\sigma x,\sigma y)\leq r d(x,y)$, so that $\Lip_d(\sigma)\leq r$. According to~\cite[Theorem 7.13]{Wa}, we can calculate the entropy of a shift space by counting the number of length-$n$ cylinder sets $\mathcal{B}_n$, $h(\sigma)=\lim_n \frac1n \log\#\mathcal{B}_n$. But we can also use the covering of $X$ by cylinder sets to estimate the Hausdorff dimension. Each cylinder set $B\in\mathcal{B}_n$ has diameter $r^{-n}$. If $s>h(\sigma)/\log(r)$, then $\sum_{B\in\mathcal{B}_n} |B|_d^s=(\#\mathcal{B}_n)\cdot r^{-ns} \to 0$ as $n\to\infty$, so that $\mu^{s}(X)=0$. Therefore $\HD_d(X)\leq h(\sigma)/\log r$. With our estimate for the Lipschitz constant this becomes $h(\sigma)\geq \HD_d(X)\cdot\logplus\Lip_d(\sigma)$. Together with Lemma~\ref{lem:bounds}, this shows that the metric $d$ gives equality $h(\sigma) = \HD_d(X)\cdot\logplus\Lip_d(\sigma)$.

\subsubsection*{Interval maps:}\strut\\
Let $I=[0,1]$ be the unit interval. If $f:I\to I$ is transitive and piecewise monotone, then $\HausLip(I,f)=h(f)$ by Parry's theory of constant slope maps~\cite{Pa}. Parry constructed a homeomorphism $\psi:I\to I$ and a piecewise affine map $g$ with slope $\pm\exp h(f)$ on each piece, conjugate to $f$ via $\psi$. The Euclidean metric gives the system $(I,g)$ Hausdorff dimension 1 and Lipschitz constant $\lambda$. Then the relevant metric for $f$ is just $d(x,y)=|\psi(x)-\psi(y)|$, and as in Lemma~\ref{lem:invariant} we get equality $h(f)=\HD_d(I)\cdot\logplus\Lip_d(f)$.

\subsubsection*{More interval maps:}\strut\\
Even without transitivity, if an interval map is either piecewise monotone or $C^\infty$ smooth, then a modification of Parry's construction produces conjugate interval maps not of constant slope, but with Lipschitz constants (with respect to the Euclidean metric) arbitrarily close to the exponential of the entropy~\cite{BR}. This gives equality of the HausLip constant and the entropy in these classes of interval maps as well. The paper~\cite{BR} also presents examples (non-smooth, countably many turning points) for which the infimum of Lipschitz constants among conjugate interval maps is strictly larger than the entropy. But all of those Lipschitz constants are computed with respect to the Euclidean metric. This leaves open the following question: \emph{Is there an interval map $f:I\to I$ with $\HausLip(I,f)>h(f)$?}

\subsection{Constructing Metrics}\label{subsec:metrics}

We briefly present the main tools used in this paper for constructing compatible metrics on a given compact metrizable space $X$.

\subsubsection*{Compatibility of a metric:}\strut\\
To check that a metric $d$ induces the right topology on $X$, it is enough to verify the condition: $x_n \to y$ in the topology of $X$ if and only if $d(x_n,y)\to 0$. In other words, a compatible metric is one which induces the correct notion of convergence of sequences.

\subsubsection*{Power rule:}\strut\\
If $d$ is a compatible metric on $X$ and $0<\gamma<1$, then the function $d^\gamma$ defined by $(x,y)\mapsto (d(x,y))^\gamma$ is another compatible metric on $X$. We must use exponents $\gamma<1$ in order to preserve the triangle inequality. Moreover, it follows immediately from the definitions that
\begin{equation}\label{power}
\HD_{d^\gamma}(X)=\frac{1}{\gamma}\HD_d(X) \quad \text{and} \quad \Lip_{d^\gamma}(f)=\left(\Lip_d(f)\right)^\gamma,
\end{equation}
where $f$ is any continuous map on $X$.

\subsubsection*{Product rule:}\strut\\
If $d_i$ is a compatible metric on $X_i$, $i=1,\ldots,n$, then the max metric $d(x,y)=\max_i d_i(x_i,y_i)$ is a compatible metric on the product space $\prod X_i$. Given also continuous maps $f_i$ on these spaces, the product rule for Lipschitz constants is clear from the definition, while for Hausdorff dimension there is a known inequality \cite{Fal, Ho}:
\begin{equation}\label{product}
\HD_{d}\left(\prod X_i\right)\geq\sum_{i=1}^n \HD_{d_i} X_i \quad \text{and} \quad \Lip_d(f_1 \times \cdots \times f_n) = \max_i \Lip_{d_i}(f_i).
\end{equation}
Moreover, Howroyd~\cite{Ho2} proved that the inequality for Hausdorff dimension becomes an equality if for all (or all but one) of the spaces $X_i$ the Hausdorff dimension coincides with the packing dimension (for the definition of packing dimension see, eg.,~\cite{Fal}). Notice also that an equality of Hausdorff and packing dimensions is preserved under replacement of a metric $d$ by $d^\gamma$, since the power rule~\eqref{power} holds also for packing dimension.

\subsubsection*{Frink's metrization theorem:}\strut\\
Later in the paper we will encounter an object $\rho$ which is almost a metric, but satisfies only a weak version of the triangle inequality $\rho(x,z)\leq 2\max(\rho(x,y),\rho(y,z))$. Frink~\cite{Fr} showed that this property implies the existence of a genuine metric $d$ satisfying $d(x,y)\leq \rho(x,y) \leq 4d(x,y)$.


\section{Linear Maps on the Torus}\label{sec:torus}

\begin{theorem}\label{th:torus}
Let $f$ be a linear map of the $n$-torus $\R^n/\Z^n$. Then the HausLip constant equals the entropy. In other words, for each $\epsilon>0$ there is a metric $d$ on $\R^n/\Z^n$ compatible with the topology such that $$\HD_d(\R^n/\Z^n)\logplus\Lip_d(f) < h(f)+\epsilon.$$
\end{theorem}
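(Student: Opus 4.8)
The plan is to reduce to the classical formula $h(f)=\sum_{|\lambda_i|>1}\log|\lambda_i|$ for the entropy of the map induced by an integer matrix $A\in M_n(\Z)$ (the sum taken over eigenvalues with multiplicity), and then, since Lemma~\ref{lem:invariant} already supplies $\HausLip(\R^n/\Z^n,f)\ge h(f)$, to produce for each small $\Lambda>1$ a compatible metric $d$ making $\HD_d\cdot\logplus\Lip_d$ close to $h(f)$. The guiding idea is that a direction expanding by $\mu>1$ contributes exactly $\log\mu$ to the product $\HD\cdot\logplus\Lip$ no matter how it is rescaled: by the power rule~\eqref{power}, replacing the metric on that line by its $\gamma$-th power turns the Lipschitz constant $\mu$ into $\mu^\gamma$ and the dimension $1$ into $1/\gamma$, leaving the product $\tfrac1\gamma\log(\mu^\gamma)=\log\mu$ fixed. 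This lets me trade expansion for dimension, and that trade is what neutralizes the stable and neutral directions.

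First I would split $\R^n=E_1\oplus\cdots\oplus E_p\oplus E_0$ into the $A$-invariant real subspaces on which all eigenvalues share a common modulus $\mu_1,\dots,\mu_p>1$, respectively modulus $\le1$ on $E_0$, and fix a standard adapted (Lyapunov) norm making these summands orthogonal, with $\|Av\|$ within a factor $e^{\pm\epsilon}$ of $\mu_j\|v\|$ on $E_j$ and at most $(1+\epsilon)\|v\|$ on $E_0$; this absorbs complex eigenvalues and Jordan blocks. Writing $v=v_0+\sum_j v_j$ in the corresponding coordinates, setting $k=\sum_j\dim E_j$, and choosing the equalizing exponents $\gamma_j=\log\Lambda/(\log\mu_j+\epsilon)<1$, I define on $\R^n$ the metric $d(v,w)=\max\bigl(\|v_0-w_0\|,\ \max_j\|v_j-w_j\|^{\gamma_j}\bigr)$. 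Each term is a translation-invariant pseudometric and their maximum separates points, so $d$ is a genuine metric inducing the Euclidean topology; since it depends only on $v-w$ and $A\Z^n\subseteq\Z^n$, the quotient $\bar d(\bar x,\bar y)=\inf_{m\in\Z^n}d(x,y+m)$ is a compatible metric on the torus for which $f$ stays Lipschitz. (Should the maximum of powered norms fail the triangle inequality in some arrangement, Frink's theorem repairs it at the cost of harmless constants.)

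Next I would read off the two quantities. On $E_j$ the map expands by at most $e^{\epsilon}\mu_j$, so after the power rule its effective Lipschitz constant is at most $(e^{\epsilon}\mu_j)^{\gamma_j}=\Lambda$, while on $E_0$ it is at most $1+\epsilon$; taking $\Lambda>1+\epsilon$ gives $\Lip_{\bar d}(f)\le\Lambda$. For the dimension I would cover the torus directly, as in the shift-space example: a $\bar d$-ball of radius $r$ is a box of Euclidean side $\approx r^{1/\gamma_j}$ along $E_j$ and side $\approx r$ along the $\dim E_0=n-k$ remaining directions, so about $r^{-s_0}$ boxes cover $\R^n/\Z^n$, with $s_0=\sum_j(\dim E_j)/\gamma_j+(n-k)$; hence $\sum_B|B|_{\bar d}^{\,s}\lesssim r^{s-s_0}\to0$ for $s>s_0$, giving $\HD_{\bar d}\le s_0$. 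Since $\sum_j(\dim E_j)\log\mu_j=h(f)$ and $1/\gamma_j=(\log\mu_j+\epsilon)/\log\Lambda$, we get $\sum_j(\dim E_j)/\gamma_j\le (h(f)+k\epsilon)/\log\Lambda$, so
\[
\HD_{\bar d}(\R^n/\Z^n)\cdot\logplus\Lip_{\bar d}(f)\ \le\ \Bigl(\frac{h(f)+k\epsilon}{\log\Lambda}+(n-k)\Bigr)\log\Lambda\ =\ h(f)+k\epsilon+(n-k)\log\Lambda .
\]
Letting $\epsilon\to0$ and then $\Lambda\to1^{+}$ drives the right-hand side to $h(f)$, which proves the theorem.

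The main obstacle I anticipate is conceptual rather than computational: the eigendirections of $A$ are generically irrational, so the torus is not literally a product of tori along $E_0,E_1,\dots,E_p$, and one cannot directly invoke the product rule~\eqref{product}. I sidestep this by building the metric on the universal cover $\R^n$ and pushing it down by the lattice infimum, which is legitimate precisely because the powered-norm metric is translation invariant and $A$ preserves the lattice; the box-counting bound on $\HD_{\bar d}$ is then a local computation, unaffected by the global winding of the foliations. The second point to watch is that the parasitic dimension $n-k$ of the stable and neutral part multiplies $\log\Lambda$ in the final bound, so it is essential that the equalization pins the unstable contribution at exactly $h(f)$ while $\Lambda$ — and with it the only positive cost $\log\Lambda$ — is sent to $1$; the power rule is exactly the device that makes these two requirements compatible.
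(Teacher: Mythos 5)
Your proposal is correct and follows the same overall strategy as the paper's proof: decompose $\R^n$ into $A$-invariant subspaces, put an adapted norm on each so that the expansion rate is within $\epsilon$ of the eigenvalue modulus (your Lyapunov norm is the paper's weighted max norm on each real Jordan block, just packaged by modulus class rather than block by block), use the power rule to trade expansion for dimension so that every unstable direction ends up with the common Lipschitz constant $\Lambda$, take the max metric, and push it down to the torus by translation invariance. The one genuine divergence is in how you bound the Hausdorff dimension of the resulting product metric from above: the paper invokes Howroyd's theorem that the product inequality~\eqref{product} is an equality when Hausdorff and packing dimensions agree on the factors (and checks that this agreement survives the power rule), whereas you cover the torus directly by $\bar d$-balls, which are Euclidean boxes of side $r^{1/\gamma_j}$ along $E_j$ and $r$ along $E_0$, and count them to get $\HD_{\bar d}\leq\sum_j(\dim E_j)/\gamma_j+(n-k)$. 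Your route is more elementary and self-contained --- only the upper bound is needed, so the packing-dimension machinery can indeed be bypassed --- at the cost of writing out the covering estimate explicitly. Two small remarks: the parenthetical appeal to Frink is unnecessary, since for $\gamma_j<1$ the function $\|\cdot\|^{\gamma_j}$ already satisfies the triangle inequality and a maximum of pseudometrics is a pseudometric; and your final bound $h(f)+k\epsilon+(n-k)\log\Lambda$ requires sending both parameters to their limits, which is fine but means the adapted norm must be re-chosen as $\epsilon$ shrinks (the paper ties both roles to the single parameter $\eta$, which is tidier but not essentially different).
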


Before giving the proof, we discuss an example. Consider the system $(\R^2/\Z^2, f)$, where $f(x,y)=(2x+y,x+y)\!\!\mod\Z^2$. This map is sometimes called Arnold's cat map, and it is the prototypical example of a hyperbolic toral automorphism. It is induced by the matrix $A=\left(\begin{smallmatrix}2&1\\1&1\end{smallmatrix}\right)$ with eigenvalues $\lambda_1=\frac{3+\sqrt{5}}{2}$, $\lambda_2=\frac{3-\sqrt{5}}{2}$. Now $A$ is diagonalizable, i.e., there is a real nonsingular matrix $T$ such that $\left(\begin{smallmatrix}\lambda_1&0\\0&\lambda_2\end{smallmatrix}\right)=TAT^{-1}$. Thus, we may work with the conjugate system $(\R^2/T\Z^2,g)$, where $g(x,y)=(\lambda_1 x, \lambda_2 y)\!\!\mod T\Z^2$. Both systems are illustrated in Figure~\ref{fig:torus}. There is a very tempting choice for a metric here. Since $\R^2$ carries the Euclidean metric, we can give the torus the metric $d$ such that the projection map $\R^2\to \R^2/T\Z^2$ is a local isometry. This gives $\HD_d(\R^2/T\Z^2)=2$, $\Lip_d(g)=\lambda_1$, and $h(g)=\log\lambda_1$. But that means the metric $d$ is not very good for our purposes: the Hausdorff dimension times the logarithm of the Lipschitz constant is twice as large as the entropy.
\begin{figure}[h!!]
\includegraphics[height=4cm]{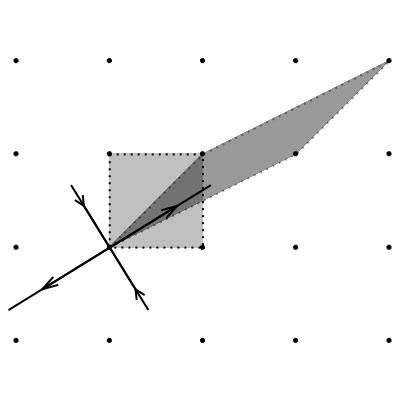} \hspace{1cm}
\includegraphics[height=4cm]{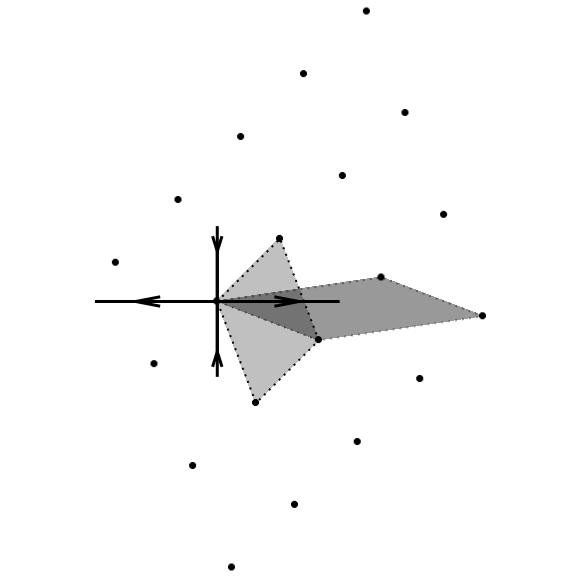}
\caption{Conjugate toral automorphisms. On the left we see $\R^2$ with the integer lattice $\Z^2$. Shown as gray boxes are a fundamental domain for the torus and its image under $A$ (which is another fundamental domain). We also see the eigendirections of $A$, which project to stable and unstable manifolds of a fixed point for the induced toral system $(\R^2/\Z^2,f)$. On the right are the corresponding objects after diagonalization, i.e. for the conjugate system $(\R^2/T\Z^2,g)$.}\label{fig:torus}
\end{figure}

This kind of construction can be repeated whenever the integer matrix $A$ is real diagonalizable, leading to a metric $d$ with $\HD_d(\R^n/T\Z^n)\cdot\logplus\Lip_d(g)$ equal to $n\log\max_i|\lambda_i|$, which doesn't correspond with the entropy~\eqref{hf}, unless all the eigenvalues have the same modulus. In order to prove our theorem, we need to construct a better metric. Let us outline our main ideas. We start by putting separate metrics on each eigendirection in such a way that the unstable directions enjoy a small uniform expansion rate. This gives us common small Lipschitz constants (for which the product rule takes a maximum) at the cost of larger Hausdorff dimensions (for which the product rule takes a sum). Putting together these metrics with the product rule, we get a metric on $\R^n$ such that the contribution of each eigendirection to $\HD\cdot\logplus\Lip$ is roughly the same as its contribution to the entropy. 

\begin{proof}[Proof of Theorem~\ref{th:torus}]
Linearity means that the map $f$ is of the form $x\mapsto Ax \!\!\mod \Z^n$ for some $n\times n$ matrix $A$ with integer entries. There is a ``real Jordan form'' $J=TAT^{-1}$ where $J,T$ are real matrices and $J$ is block diagonal
$J=\diag(J_1, \ldots, J_k)$
with each block $J_i$ an $n_i \times n_i$ square matrix of one of the following two forms,
\begin{equation*}
J_i=\left[
\begin{array}{ccccc}
\lambda_i & 1 \\
& \lambda_i & 1 \\
&& \lambda_i & \ddots \\
&&& \ddots & 1 \\
&&&& \lambda_i
\end{array}
\right]
\quad \text{or} \quad
J_i=\left[
\begin{array}{cccccccc}
\alpha_i & -\beta_i & 1 & 0 \\
\beta_i & \alpha_i & 0 & 1 \\
&& \alpha_i & -\beta_i & \ddots \\
&& \beta_i & \alpha_i && \ddots \\
&&&& \ddots && 1 & 0 \\
&&&&& \ddots & 0 & 1 \\
&&&&&& \alpha_i & -\beta_i \\
&&&&&& \beta_i & \alpha_i
\end{array}
\right].
\end{equation*}
The first case corresponds to a real eigenvalue $\lambda_i$; the second corresponds to a complex conjugate pair of eigenvalues $\lambda_i, \overline{\lambda_i}$, where $\lambda_i=\alpha_i+\beta_i\sqrt{-1}$. The formula for the entropy of $f$ in terms of these eigenvalues is well-known~\cite{Wa},
\begin{equation}\label{hf}
h(f)=\sum_{i=1}^k n_i \logplus |\lambda_i| = \sum_{|\lambda_i|>1} n_i\log|\lambda_i|.
\end{equation}

The change of variables $y=Tx$ induces a homeomorphism $\psi:\R^n/\Z^n\to\R^n/T\Z^n$, allowing us to replace the map $f$ with the conjugate map $g(y_1,\ldots,y_k) = (J_1y_1, \ldots, J_ky_k)\!\! \mod T\Z^n$, as summarized in the following commutative diagram
\begin{equation}\label{CD}
\begin{CD}
\R^n/\Z^n @>f>> \R^n/\Z^n\\
@V{\psi}VV @VV{\psi}V\\
\R^n/T\Z^n @>>g> \R^n/T\Z^n
\end{CD}.
\end{equation}

Now we work individually with each of the linear maps $v\mapsto J_i v$ on $\R^{n_i}$, $i=1,\ldots,k$. Fix a real number $0<\eta<1$ small enough that $1+\eta<|\lambda_i|$ whenever $|\lambda_i|>1$. We may introduce (real) vector space norms $||\cdot||_i$ by the rule
\begin{equation*}
||v||_i = ||(v_1,\ldots,v_{n_i})^{\top}||_i :=
\begin{cases}
\max\limits_{1\leq t\leq n_i} \frac{|v_t|}{\eta^t}, & \text{if }\lambda_i\in\R \\[.5em]
\max\limits_{1 \leq t\leq\frac{n_i}{2}} \frac{|v_{2t-1}+v_{2t}\sqrt{-1}|}{\eta^t}, & \text{if }\lambda_i\notin\R.
\end{cases}
\end{equation*}
The induced metric $d_i(v, v')=||v-v'||_i$ is Lipschitz equivalent to $d_{Eucl}$, the Euclidean metric. In particular, the inequalities are $\frac{d_{Eucl}(v,v')}{\sqrt{n_i}} \leq d_i(v,v') \leq \frac{d_{Eucl}(v,v')}{\eta^{n_i}}$. This shows that $d_i$ is compatible with the topology and that the dimensions are right: with respect to $d_i$ the space $\R^{n_i}$ (and every subset thereof with nonempty interior) has both Hausdorff dimension and packing dimension $n_i$.

Next we give the Lipschitz constant for the linear map $v\mapsto J_i v$ on $\R^{n_i}$,
\begin{equation*}
\Lip_{d_i}(J_i)=\begin{cases}
|\lambda_i|, & \text{if } \lambda_i\in\R \text{ and } n_i=1 \text{ or } \lambda_i\notin\R \text{ and } n_i=2,\\
|\lambda_i|+\eta, & \text{otherwise}.
\end{cases}
\end{equation*}
The first case is clear since $J_i$ represents the multiplication by $\lambda_i$ (or the multiplication by $\lambda_i$ on real and imaginary parts separately).  In the second case we use linearity and the following inequalities (with the agreement that $v_{n_i+1}=v_{n_i+2}=0$):
\begin{align*}
\text{Real } &\text{case:}\\
|| J_i v ||_i &=  \max_{1\leq t\leq n_i} \frac{ | \lambda_i v_t + v_{t+1} | }{\eta^t}\\
&\leq \max_{1\leq t\leq n_i} \left( | \lambda_i |\frac{|v_t|}{\eta^t} + \eta \cdot \frac{ |v_{t+1}|}{\eta^{t+1}} \right)
\leq \left(|\lambda_i|+\eta\right) ||v||_i \\
\text{Complex } &\text{case:} \\
|| J_i v ||_i &= \max_{1\leq t\leq\frac{n_i}{2}} \frac{|(\alpha_i + \beta_i\sqrt{-1}) (v_{2t-1}+v_{2t}\sqrt{-1}) + (v_{2t+1}+v_{2t+2}\sqrt{-1})|}{\eta^t} \\
&\leq \max_{1\leq t\leq\frac{n_i}{2}} \left( |\lambda_i| \frac{|v_{2t-1}+v_{2t}\sqrt{-1}|}{\eta^t} + \eta\frac{|v_{2t+1}+v_{2t+2}\sqrt{-1}|}{\eta^{t+1}} \right)
\leq \left(|\lambda_i|+\eta\right) ||v||_i.
\end{align*}
To see that there is no better Lipschitz constant, it suffices to choose a vector $v$ with $\frac{||J_iv||_i}{||v||_i}=|\lambda_i|+\eta$. This happens when we put $v_t = (\eta\lambda_i / |\lambda_i|)^t$, or in the complex case, $v_{2t-1}+v_{2t}\sqrt{-1}=(\eta\lambda_i / |\lambda_i|)^t$.

Next, we apply the power rule using exponents $\gamma_i$ chosen as follows:
\begin{equation*}
\gamma_i=\begin{cases}
1, & \text{if }|\lambda_i| \leq 1, \\[.5em]
\frac{\log(1+\eta)}{\log(|\lambda_i|)}, & \text{if } |\lambda_i|>1 \text{ and } \Lip_{d_i}(J_i)=|\lambda_i|, \\[.5em]
\frac{\log(1+\eta)}{\log(|\lambda_i|+\eta)}, & \text{if } |\lambda_i|>1 \text{ and } \Lip_{d_i}(J_i)=|\lambda_i|+\eta.
\end{cases}
\end{equation*}
The resulting Hausdorff dimensions and Lipschitz constants are as follows:
\begin{equation*}
\begin{array}{rcc}
\toprule
& \HD_{d_i^{\gamma_i}}(\R^{n_i}) & \Lip_{d_i^{\gamma_i}}(J_i) \\\midrule
\text{Stable \& neutral subspaces } (|\lambda_i| \leq 1): & n_i & |\lambda_i| \text{ or } |\lambda_i|+\eta \\[.5em]
\text{Unstable subspaces } (|\lambda_i|>1): & \frac{n_i\log|\lambda_i|}{\log(1+\eta)} \text{ or } \frac{n_i\log(|\lambda_i|+\eta)}{\log(1+\eta)} & 1+\eta \\ \bottomrule
\end{array}
\end{equation*}

Finally, we use the power rule to produce a single metric $\overline{d}$ on $\R^n$
\begin{equation*}
\overline{d}(y,y')=\max_{1\leq i\leq k} d_i^{\gamma_i}(y_i,y'_i).
\end{equation*}
By adding dimensions and taking the maximum of the Lipschitz constants we get
\begin{equation*}
\HD_{\overline{d}}(\R^n) \cdot \logplus\Lip_{\overline{d}}(J) = \log(1+\eta) \sum_{i=1}^k \frac{n_i}{\gamma_i} \leq n\log(1+\eta) + \sum_{|\lambda_i|>1}n_i\log(|\lambda_i|+\eta).
\end{equation*}
Comparing with~\eqref{hf}, we see that this converges to $h(f)$ as we send $\eta\to0$. It remains to push our metric down to the torus. Consider the projection $\pi:\R^n\to\R^n/T\Z^n$. Let
\begin{equation}\label{dtorus}
d_g(x,y)=\min \left\{ \overline{d}(v,w) \mid v\in\pi^{-1}(x),w\in\pi^{-1}(y) \right\}.
\end{equation}
The minimum in~\eqref{dtorus} is attained because $\overline{d}$ is translation-invariant and bounded below by some constant times the Euclidean metric. As a map of metric spaces, the map $\pi$ is then a surjective local isometry. It follows that $d_g$ is compatible with the topology of the torus and that $\HD_{d_g}(\R^n/T\Z^n)=\HD_{\overline{d}}(\R^n)$. Moreover, $\Lip_{d_g}(g)\leq \Lip_{\overline{d}}(J)$ because of the inequality 
$\frac{d_g(gx,gy)}{d_g(x,y)} \leq \frac{\overline{d}(Jv,Jw)}{\overline{d}(v,w)}$,
where $v,w$ are chosen to give the minimum in~\eqref{dtorus}.

Finally, we go back through the conjugacy~\eqref{CD}, letting $d$ be the metric on the torus $\R^n/\Z^n$ such that $\psi$ is an isometry. The theorem follows by taking $\eta$ small enough.
\end{proof}


\section{Expansive Maps}\label{sec:expansive}

Following Walters~\cite{Wa}, we consider the following notion of expansivity for non-invertible mappings.

\begin{definition}
A continuous map $f:X\to X$ of a compact metric space $(X,d)$ is called \emph{positively expansive} if there is $c>0$ (an \emph{expansivity constant}) such that if $d(f^nx,f^ny)\leq c$ for all $n\geq0$, then $x=y$.
\end{definition}

Positive expansiveness is sometimes called one-sided expansiveness, to distinguish it from the two-sided notion usually used when discussing expansive homeomorphisms. We remark that the property of positive expansiveness is preserved if we replace the metric $d$ by another compatible metric, although the expansivity constant may change~\cite{Wa}. Our main result for positively expansive maps is as follows.

\begin{theorem}
If $f:X\to X$ is positively expansive, then $\HausLip(X,f)=h(f)$. In other words, for every $\epsilon>0$ there is a metric $d$ on $X$ compatible with its topology such that $$\HD_d(X)\logplus\Lip_d(f) < h(f)+\epsilon.$$
\end{theorem}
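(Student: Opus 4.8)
The plan is to reduce the whole statement to producing a single well-adapted metric and then to quote the lower bound of Lemma~\ref{lem:bounds}. Since $\HausLip(X,f)\geq h(f)$ always (Lemma~\ref{lem:invariant}), it suffices to construct one compatible metric $\rho$ making $f$ \emph{uniformly expanding near the diagonal}, i.e. $\Lip_\rho(f)\leq\mu$ and $\Skew_\rho(f)\geq\mu$ for some $\mu>1$. Indeed, $\Skew\le\Lip$ then forces $\Lip_\rho(f)=\Skew_\rho(f)=\mu$, and the left inequality of Lemma~\ref{lem:bounds} gives $\HD_\rho(X)\log\mu=\HD_\rho(X)\logplus\Skew_\rho(f)\leq h(f)$, whence $\HD_\rho(X)\logplus\Lip_\rho(f)=\HD_\rho(X)\log\mu\leq h(f)<h(f)+\epsilon$ (the case $h(f)=\infty$ being trivial). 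So the entire content is the construction of a uniformly expanding compatible metric, the one-sided analogue of the adapted metrics of Fathi~\cite{Fa}. I fix an expansivity constant $c$ and will use the standard uniform-expansiveness lemma~\cite{Wa}: for each $\delta>0$ there is $M\in\N$ with $[\,D(f^jx,f^jy)\le c$ for all $j\le M\,]\Rightarrow D(x,y)\le\delta$.

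The first step builds a combinatorial gauge that is insensitive to distortion of the original metric. Put $a=c/2$, let $m(x,y)=\min\{n\ge0:D(f^nx,f^ny)>a\}$ (finite for $x\ne y$, with $m(x,x)=\infty$), and set $g(x,y)=\lambda^{-m(x,y)}$ for a parameter $\lambda>1$. Since $m(fx,fy)=m(x,y)-1$ whenever $D(x,y)\le a$, one gets $g(fx,fy)=\lambda\,g(x,y)$ near the diagonal and $g(fx,fy)\le\lambda\,g(x,y)$ everywhere. The gauge is not a metric, and I expect the weak triangle inequality
\[
g(x,z)\le\lambda^{M}\max\big(g(x,y),g(y,z)\big)
\]
to be the main obstacle. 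I would prove it by a two-threshold trick: writing $L=\min(m(x,y),m(y,z))$, for $j<L$ both $D(f^jx,f^jy)\le a$ and $D(f^jy,f^jz)\le a$, hence $D(f^jx,f^jz)\le 2a=c$; because $c$ is an expansivity constant, the uniform-expansiveness lemma upgrades this to $D(f^jx,f^jz)\le a$ for all $j<L-M$, so $m(x,z)\ge L-M$. Raising $g$ to the power $\gamma=\log 2/(M\log\lambda)$ normalizes the constant to $2$, and Frink's theorem then yields a genuine compatible metric $\tilde d$ with $\tilde d\le g^{\gamma}\le 4\tilde d$. From $m(f^jx,f^jy)=m(x,y)-j$ for $j<m(x,y)$ one reads off that $\tilde d$ has \emph{logarithmic separation}: it expands by a uniform factor $\kappa>1$ over some fixed number $N_0$ of steps for all sufficiently close pairs.

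The second step converts this multi-step expansion into a clean per-step rate. Choose $1<\mu<\kappa^{1/N_0}$, fix $c'>0$, and define the sup-metric
\[
\rho(x,y)=\sup_{n\ge0}\ \mu^{-n}\,\min\big(\tilde d(f^nx,f^ny),c'\big).
\]
This is a genuine metric (the triangle inequality passes through the supremum) and is compatible by the usual tail estimate. Its self-similarity gives $\rho(fx,fy)=\mu\sup_{n\ge1}(\cdots)\le\mu\,\rho(x,y)$, so $\Lip_\rho(f)\le\mu$ for free. For a near-diagonal pair the $N_0$-step expansion of $\tilde d$ makes the $n=N_0$ term strictly exceed the $n=0$ term, so the supremum is attained at some $n\ge1$ and the displayed relation becomes an equality, giving $\Skew_\rho(f)=\mu$. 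This is exactly the uniformly expanding metric demanded by the reduction, and the theorem follows.

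The hard part is thus the weak triangle inequality, which is where expansiveness genuinely enters; it is also why a single application of Frink does not suffice. Frink introduces a fixed multiplicative distortion that would corrupt the \emph{per-step} ratio $\Lip_\rho(f)/\Skew_\rho(f)$ if one stopped at $\tilde d$ (whose one-step skew can fall below $1$), so the extra supremum layer is needed to recover a genuinely uniform rate and make $\Lip$ and $\Skew$ coincide. Finally, since a positively expansive map has finite entropy, the same metric gives $\HD_\rho(X)<\infty$, and $\dimtop(X)\le\HD_\rho(X)$ yields the advertised corollary on finite topological dimension.
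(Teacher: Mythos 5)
Your argument is correct, and its first half --- the gauge $\lambda^{-m(x,y)}$ built from the first separation time, the weak triangle inequality obtained from uniform expansiveness, and Frink's theorem --- is the same as the paper's. The second half is genuinely different. The paper stops at the Frink metric $D$, records the near-diagonal estimates $\tfrac14\alpha^i D(x,y)\leq D(f^ix,f^iy)\leq 4\alpha^i D(x,y)$, hence $\Lip_D(f^n)\leq 16\,\Skew_D(f^n)$, passes to the Bowen-type metric $d=\max_{0\leq i<n}D(f^i\cdot,f^i\cdot)/L^{i/n}$, and disposes of the factor $16$ through an error term $\tfrac1n\HD_D(X)\log 16$ that vanishes as $n\to\infty$; this forces it to prove $\HD_D(X)<\infty$ as a separate step. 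You instead layer a second, Fathi-style adapted metric $\rho=\sup_{n\geq0}\mu^{-n}\min(\tilde d(f^n\cdot,f^n\cdot),c')$ on top of the Frink metric: its shift structure gives $\Lip_\rho(f)\leq\mu$ for free, and its near-diagonal self-similarity gives $\Skew_\rho(f)=\mu$ exactly, so one application of the left inequality in Lemma~\ref{lem:bounds} yields $\HD_\rho(X)\logplus\Lip_\rho(f)\leq h(f)$ with a \emph{single} metric --- a slightly stronger conclusion (the infimum defining $\HausLip(X,f)$ is attained), with no limiting argument and with finiteness of $\HD_\rho(X)$ falling out of the same inequality. The one step you should write out in full is the claim that the supremum is realized at some $n\geq1$ for near-diagonal pairs: you need both $m(x,y)>N_0$ (so the $N_0$-step expansion of $\tilde d$ applies) and $\tilde d(f^{N_0}x,f^{N_0}y)\leq c'$ (so the truncation is inactive at $n=N_0$), and both do follow from $\rho(x,y)$ being small, since $\rho\geq\min(\tilde d,c')$ and $\tilde d(f^{N_0}x,f^{N_0}y)\leq 4\lambda^{\gamma N_0}\tilde d(x,y)$. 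The paper's route is more robust in that it only needs the ratio of $\Lip$ to $\Skew$ for some iterate to be bounded; yours is sharper and closer in spirit to Fathi's hyperbolic metrics.
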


Fathi has a related result for expansive homeomorphisms, namely, the existence of a metric $d$ with $h(f)\geq\frac12\HD_d(X)\cdot\log k$ for a certain local anti-Lipschitz constant $k$,~\cite{Fa}. Our proof is patterned off of his -- we are grateful for his insights.

\begin{proof}
Choose a metric $\delta$ on $X$ compatible with its topology and let $c$ be the corresponding expansivity constant. Given any number $\alpha>1$ we may define
\begin{equation*}
\rho(x,y)=\alpha^{-n(x,y)}, \text{ where } n(x,y)=\inf\{ i\geq0 \mid \delta(f^ix,f^iy)>c\}.
\end{equation*}
The function $\rho$ is almost a metric: it is symmetric and $\rho(x,y)\geq0$ with equality if and only if $x=y$. Moreover, given a convergent sequence $x_j\to y$, uniform continuity gives $\rho(x_j,y)\to0$. Conversely, for each $\eta>0$ the compact set $\{(x,y)\mathrel{|}\delta(x,y)\geq\eta\}$ in the product space $X\times X$ is covered by the open sets $U_m=\{(x,y)\mathrel{|}n(x,y)\leq m\}$, $m\in\N$. Thus $\forall \eta>0$ $\exists m\in\N$ such that $\delta(x,y)\geq\eta \implies \rho(x,y)\geq \alpha^{-m}$. This proves that $\rho(x_j,y)\to0$ only if $x_j\to y$. Thus, $\rho$ satisfies all the properties of a compatible metric except the triangle inequality. Instead, we may find $m$ such that if $\delta(x,y)\geq\frac{c}{2}$, then $n(x,y)\leq m$. The triangle inequality for $\delta$ then gives us $\min(n(x,y),n(y,z)) \leq n(x,z)+m$. Requiring $\alpha>1$ to be small enough that $\alpha^m\leq2$, we get the weakened triangle inequality
\begin{equation*}
\rho(x,z)\leq 2\max(\rho(x,y),\rho(y,z)).
\end{equation*}
By Frink's metrization theorem (see Section~\ref{subsec:metrics}) there is a metric $D$ on $X$ with
\begin{equation}\label{D}
D(x,y)\leq\rho(x,y)\leq4D(x,y).
\end{equation}
Then also $D(x_j,y)\to0$ if and only if $x_j\to y$; i.e., $D$ is compatible with the topology of $X$. Finally, given any natural number $n$ we may define another topologically compatible metric $d$ by
\begin{equation}\label{d}
d(x,y)=\max_{0\leq i<n} \frac{D(f^ix,f^iy)}{L^{i/n}},
\end{equation}
where $L=\Lip_D(f^n)$. By construction, $\Lip_d(f)=L^{1/n}$. Moreover, for all $i\in\mathbb{N}$ we have $\rho(f^ix,f^iy)\leq\alpha^i\rho(x,y)$
with equality on some neighborhood $V_i$ of the diagonal in $X\times X$. Using~\eqref{D} this yields
\begin{align}\label{DLip}
D(f^ix,f^iy)&\leq4\alpha^i D(x,y), \quad \text{for }(x,y)\in X\times X, \text{ and}\\
\label{DSkew}
D(f^ix,f^iy)&\geq\tfrac14\alpha^i D(x,y), \quad \text{for }(x,y)\in V_i.
\end{align}
Combining~\eqref{d} with~\eqref{DLip} we get inequalities $D(x,y)\leq d(x,y)\leq 4\alpha^n D(x,y)$. This shows that the identity map $id:(X,d)\to(X,D)$ is a bi-Lipschitz mapping, and therefore $\HD_d(X)=\HD_D(X)$. Combining~\eqref{DLip} with~\eqref{DSkew} we get $\Lip_D(f^n)\leq16\Skew_D(f^n)$. Applying Lemma~\ref{lem:bounds} and the well-known rule $h(f^n)=nh(f)$ we get
\begin{align*}
\HD_d(X)\logplus\Lip_d(f)&=
\tfrac1n\HD_D(X)\logplus\Lip_D(f^n)\\
& \leq \tfrac1n\HD_D(X)\logplus16 + \tfrac1n\HD_D(X)\logplus\Skew_D(f^n)\\
& \leq \tfrac1n\HD_D(X)\logplus16 + h(f).
\end{align*}
To complete the proof we need only show that $\HD_D(X)<\infty$, so that for suitably large $n$ we have $\tfrac1n\HD_D(X)\logplus16<\epsilon$. Now expansive maps always have finite entropy -- this is the one-sided version of~\cite[Theorem 7.11]{Wa}. Choose $i$ such that $\alpha^i/4>1$. Then by~\eqref{DSkew} and Lemma~\ref{lem:bounds} we get
\begin{equation}\label{HDFinite}
\HD_D(X)\leq \frac{h(f^i)}{\logplus\Skew_D(f^i)}<\infty \qedhere
\end{equation}
\end{proof}

Again following Fathi, we may use our entropy inequality to recover some topological information about spaces admitting expansive mappings. These results were first proved by Ma\v{n}\'{e}~\cite{Ma} for expansive homeomorphisms.

\begin{corollary}
Any space $X$ admitting a positively expansive map $f$ has finite topological dimension. If $f$ can be chosen with zero entropy, then $X$ is totally disconnected.
\end{corollary}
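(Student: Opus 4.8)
The plan is to reuse, verbatim, the metric $D$ built in the proof of the preceding theorem, together with the universal inequality $\dimtop(X)\leq\HD_{d}(X)$ valid for every compatible metric $d$. The one fact I need to extract from that proof is the estimate~\eqref{HDFinite}: fixing $i$ with $\alpha^i/4>1$, inequality~\eqref{DSkew} gives $\Skew_D(f^i)\geq\tfrac14\alpha^i>1$, so $\logplus\Skew_D(f^i)>0$, and Lemma~\ref{lem:bounds} yields
\[
\HD_D(X)\leq\frac{h(f^i)}{\logplus\Skew_D(f^i)}.
\]
Since positively expansive maps have finite entropy (the one-sided version of~\cite[Theorem~7.11]{Wa}), the numerator $h(f^i)=i\,h(f)$ is finite, whence $\HD_D(X)<\infty$. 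The first assertion is then immediate: $\dimtop(X)\leq\HD_D(X)<\infty$, and the topological dimension is independent of the metric.

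For the second assertion I would run exactly the same estimate under the extra hypothesis $h(f)=0$. Then $h(f^i)=i\,h(f)=0$, while the denominator $\logplus\Skew_D(f^i)$ is still strictly positive by the same choice of $i$. The displayed bound therefore forces $\HD_D(X)=0$, so $\dimtop(X)\leq\HD_D(X)=0$, i.e. $\dimtop(X)=0$.

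It remains to pass from $\dimtop(X)=0$ to total disconnectedness. Having covering dimension zero gives a basis of clopen sets, and a space with a clopen basis has singleton connected components; this is classical dimension theory~\cite{HW}. I would invoke this directly rather than reprove it.

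I expect no serious obstacle here, since the construction of $D$ and the verification of~\eqref{DSkew} and~\eqref{HDFinite} are already carried out in the theorem's proof. The only point deserving genuine care is the strict positivity of $\logplus\Skew_D(f^i)$: one must check that the choice $\alpha^i/4>1$ really makes the local skew exceed $1$ (so that $\logplus$ is positive), because otherwise the bound degenerates into the indeterminate form $0\cdot\infty$ and the collapse $\HD_D(X)=0$ would not follow. Tracing through~\eqref{DSkew}, the neighborhood $V_i$ of the diagonal ensures the infimum defining $\Skew_D(f^i)$ is taken over pairs at which the ratio $D(f^ix,f^iy)/D(x,y)$ is at least $\tfrac14\alpha^i$, which settles the matter.
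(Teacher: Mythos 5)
Your proposal is correct and follows essentially the same route as the paper: both deduce finiteness of $\dimtop(X)$ from the bound~\eqref{HDFinite} via $\dimtop(X)\leq\HD_D(X)$, and both observe that $h(f)=0$ forces $\HD_D(X)=0$ and hence total disconnectedness. Your extra care about the strict positivity of $\logplus\Skew_D(f^i)$ (needed so that the bound from Lemma~\ref{lem:bounds} does not degenerate to $0\cdot\infty$) is implicit in the paper's choice of $i$ with $\alpha^i/4>1$ and is worth making explicit.
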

\begin{proof}
As mentioned earlier, the Hausdorff dimension of a compact metric space is an upper bound for its topological dimension. Thus finite dimensionality follows from~\eqref{HDFinite}. In the case of a zero-entropy mapping,~\eqref{HDFinite} shows that $X$ has topological dimension zero, i.e., is totally disconnected.
\end{proof}

\end{document}